\newenvironment{nitemize}
{\begin{list}{$\bullet$}
{\setlength{\parsep}{1ex}
\setlength{\topsep}{1.1ex}
\setlength{\partopsep}{0ex}
\setlength{\labelwidth}{0.5cm}
\setlength{\itemindent}{0cm}
\setlength{\itemsep}{0cm}
\setlength{\leftmargin}{0.7cm}
\setlength{\labelsep}{0.2cm}}}
{\end{list}}
\newenvironment{proof-theo}{\noindent{\it Proof of Theorem \ref{Kifer-implies-approx}.}}{\fbox{}\\}
\newenvironment{proof-corollary}{\noindent{\it Proof of Corollary \ref{criterium-dense-vector-space-unique-equilibrium}. }}{\fbox{}\\}
\newtheorem{theorem}{Theorem}
\newtheorem{proposition}{Proposition}
\newtheorem{lemma}{Lemma}
\theoremstyle{remark}
\theoremstyle{definition}
\theoremstyle{definition}
\def\N{{\mathbb N}}
\def\R{{\mathbb R}}
\def\Z{\mathbb Z}
\newcommand{\sS}{\mathscr{S}}
\newcommand{\ie}{{\it i.e.\/}\ }
\newcommand{\eg}{{\it e.g.\/}\ }
\newcommand{\cf}{{\it cf.\/}\ }
\def\text{\hbox}
\title[]{Entropy approximation  versus uniqueness of equilibrium  for  a dense affine  space of continuous functions}
\begin{document}

\author{Henri Comman$^\dag$}
\address{Pontificia Universidad Cat\'{o}lica de Valparaiso,  Avenida Brasil 2950, Valparaiso, Chile}
\email{henri.comman@pucv.cl}
\thanks{$\dag$ Partially supported by FONDECYT grant 1120493.}


\subjclass[2000]{Primary: 37D35; Secondary:  37A50, 37D25, 60F10}

\begin{abstract}
We show that for a $\mathbb{Z}^{l}$-action  (or $(\N\cup\{0\})^l$-action)   on a non-empty compact metrizable space $\Omega$,  the existence of a  affine  space dense in the set of continuous functions on $\Omega$ constituted by  elements admitting a unique equilibrium state implies that each invariant measure  can be approximated weakly$^*$ and in entropy by a sequence of   measures which are unique equilibrium states. 
  \end{abstract}


\maketitle

\section{Introduction}\label{introduction}\label{systems considered}
Let 
$\tau$  be an action of   $\mathbb{Z}^{l}$ (resp.  $(\N\cup\{0\})^l$) on  a non-empty  compact metrisable space  $\Omega$  for some $l\in\N$, let  $C(\Omega)$, $\mathcal{M}(\Omega)$,
$\mathcal{M}^{\tau}(\Omega)$, 
$h^\tau$, $P^\tau$ denote respectively the set of real-valued continuous functions on $\Omega$ endowed with the uniform topology, Borel  probability measures on $\Omega$ endowed with the weak-$^*$ topology, $\tau$-invariant elements of $\mathcal{M}(\Omega)$,  measure-theoretic entropy and pressure maps (\cite{Ruelle-78}). We assume that   $h^\tau$ is finite and  upper semi-continuous.

Let   $(\nu_\alpha,t_\alpha)$ be a net where $\nu_\alpha$ is  a 
  Borel probability measures on  $\mathcal{M}(\Omega)$, $t_\alpha>0$ and  $(t_\alpha)$ converges   to zero.    Recall that $(\nu_\alpha)$ is said to satisfy a large
deviation principle in  $\mathcal{M}(\Omega)$
with powers $(t_\alpha)$ if there exists a
$[0,+\infty]$-valued lower semi-continuous function $I$  on  $\mathcal{M}(\Omega)$
such
that
\[\limsup_\alpha t_\alpha\log\nu_{\alpha}(F)\le-\inf_{x\in F}I(x)\le-\inf_{x\in G}I(x)\le
 \liminf_\alpha t_\alpha\log\nu_\alpha(G)\]
for every closed set $F\subset\mathcal{M}(\Omega)$ and every open set $G\subset \mathcal{M}(\Omega)$ with $F\subset
G$; such a  function $I$ is unique  and called the rate function governing the large deviation principle.
Let $f\in C(\Omega)$ and assume that  $(\nu_\alpha,t_\alpha)$ fulfils 
     \begin{equation}\label{Introduction and statement of the result-eq20}
\forall g\in C(\Omega),\ \ \ \ \ \ \lim_\alpha t_\alpha\log\int_{\mathcal{M}(\Omega)}e^{t_\alpha^{-1}\int_{\Omega}g(\xi)\mu(d\xi)}\nu_\alpha(d\mu)=P^{\tau}(f+g)-P^{\tau}(f).
   \end{equation}
   There are two  general conditions  ensuring  that $(\nu_\alpha)$ satisfies a   large
deviation principle   with powers $(t_\alpha)$ and convex rate function: The first one is the existence 
of a vector space $V$  dense in $C(\Omega)$ 
  such that $f+g$ has a  unique equilibrium state for all $g\in V$ (\cite{Kifer-TAMS-90},  \cite{Comman_Rivera-Letelier(2010)ETDS31}); the second one, that we  shall  denote by (D) hereafter,  is the following entropy approximation property: For each $\mu\in\mathcal{M}^{\tau}(\Omega)$  there exists a net   $(\mu_i)$ in $\mathcal{M}^{\tau}(\Omega) $   such that
 $\lim_i\mu_i=\mu$, $\lim_i h^{\tau}(\mu_i)=h^{\tau}(\mu)$ and $\mu_i$ is the unique equilibrium state for some  $g_i\in C(\Omega)$
(\cite{Comman(2009)NON22}, Theorem 5.2); note that the  first countability of  $\mathcal{M}^{\tau}(\Omega)$   allows us to  replace "net" by "sequence" in (D);  since   a  measure on $\Omega$   is the unique equilibrium state for some element in $C(\Omega)$  if and only if it is ergodic  (\cite{Phelps_Dynamics and Randomness(2002)Santiago}),    one can also replace 
"unique equilibrium state for some  $g_i\in C(\Omega)$" by "ergodic"; in particular, (D) implies that either  $\mathcal{M}^{\tau}(\Omega)$ is a singleton or $\mathcal{M}^{\tau}(\Omega)$ is the Poulsen simplex, \ie the set of ergodic states is dense in  $\mathcal{M}^{\tau}(\Omega)$.

 In this note,  we prove that the first above condition implies the second one (Theorem \ref{Kifer-implies-approx}); consequently, all the large deviation principles proved applying Kifer's theorem  (namely, Theorem 2.1 of \cite{Kifer-TAMS-90}) or Theorem C of \cite{Comman_Rivera-Letelier(2010)ETDS31} (as well as its    generalization  given by  Remark  B.2)     can be proved  using   Theorem 5.2 of \cite{Comman(2009)NON22}.   We also provide two examples that can be proved using (D) but not with  the first condition (Example \ref{LDP-general-case-separated-sets}, Example \ref{LDP-general-case-periodic-points}).

The results  are given in Section \ref{results-examples}.   We  recall below some basic definitions and discuss    the main difference  between both  conditions; we also  review some  important cases where (D) has been used to prove a  large deviation principle.

\section{Preliminaries}\label{preliminaries}
\subsection{Pressure and equilibrium state}\label{Pressure and equilibrium state}
Let $(\Omega,\tau)$ be  a dynamical system as in \S \ref{systems considered}.  Order $\N^l$  lexicographically. For each   $a\in \N^l$ we put   $\Lambda(a)=\{(x_1,...,x_l)\in(\N\cup\{0\})^l:x_i<a_i,1\le i\le l\}$ and let ${|\Lambda(a)|}$ denote the cardinality of $\Lambda(a)$. For each $\varepsilon>0$ and for each
 $a\in \N^l$ let $\Omega_{\varepsilon,a}$ be
 a maximal
$(\varepsilon,\Lambda(a))$-separated set.
Recall that  $P^\tau(g)$ is defined for each $g\in C(\Omega)$ by
 \[
 P^{\tau}(g)=
\lim_{\varepsilon\rightarrow 0}\limsup_a\frac{1}{|\Lambda(a)|}\log\sum_{\xi\in
\Omega_{\varepsilon,a}}e^{\sum_{x\in\Lambda(a)}g(\tau^x\xi)},
\]
and fulfils
\begin{equation}\label{subsection-Thermodynamic formalism-eq20}
P^{\tau}(g)=
\lim_{\varepsilon\rightarrow 0}\liminf_a\frac{1}{|\Lambda(a)|}\log\sum_{\xi\in
\Omega_{\varepsilon,a}}e^{\sum_{x\in\Lambda(a)}g(\tau^x\xi)}=\sup_{\mu\in\mathcal{M}^{\tau}(\Omega)}\{\mu(g)+h^\tau(\mu)\}.
\end{equation}
(\cite{Ruelle-78}, \S 6.7, \S 6.12 and Exercise 2 p. 119 for $\Z^l$-action, \S 6.18 for $(\N\cup\{0\})^l$-action). 
Since $\mathcal{M}^{\tau}(\Omega)$ is compact and  $h^\tau$  is finite and upper semi-continuous, the above supremum is a maximum, and  each element  realizing this  maximum is called  an   equilibrium state for $g$; the map  $P^\tau$ is finite convex and continuous on $C(\Omega) $(\cite{Ruelle-78}, \S 6.8 and  \S 6.18).

\subsection{Connection with large deviation theory}\label{Connection with large deviation theory}
Each one  of the two  conditions stated in \S \ref{systems considered}  is nothing but the specialization in the dynamical setting of the corresponding well-known sufficient condition in large deviation theory in topological vector spaces that imply  the large deviation principle  (\cite{Dembo-Zeitouni}, \cite{Comman(2007)STAPRO77});  they appear virtually  in one form of another when  establishing a level-2 large deviation principle (\ie in the space $\mathcal{M}(\Omega)$). Regarding the first one,  the reader is referred  to \cite{Comman_Rivera-Letelier(2010)ETDS31},   where  Theorem C together with Remark B.2 
provides a version of   Theorem 2.1 of \cite{Kifer-TAMS-90} valid for general dynamical systems as in \S \ref{systems considered}; see also  \S 1.3 and Remark 3.3 of \cite{Comman_Rivera-Letelier(2010)ETDS31} for a discussion
 on the functional approach in large deviation theory in connection with dynamical systems, and specially with Corollary 4.6.14 of \cite{Dembo-Zeitouni}. 
 With respect to (D) and particularly the  connection with the general result involving exposed points in  large deviation theory in topological vector spaces (namely, Baldi's theorem),  see  \cite{Comman(2009)NON22},  where  Theorem 5.2 establishes    that (D)   yields  the large deviation principle  for any net $(\nu_\alpha,t_\alpha)$
   fulfilling (\ref{Introduction and statement of the result-eq20}).

 \subsection{Advantage of (D) over the first condition}\label{Advantage of (D) over the first condition}
 A straightforward but  important observation that differentiates the two above conditions is that   the first one    implies the uniqueness of equilibrium for  $f$, whereas (D) does not impose any condition on $f$ (\cf  "Important Remark" in \S 1.3 of  \cite{Comman(2009)NON22});
      this simple fact allows us to obtain  for a dynamical system $(\Omega,\tau)$ as in \S \ref{systems considered},   examples of large deviation principles that can be proved using (D) but that cannot be proved using the first condition: We just have to consider nets fulfilling (\ref{Introduction and statement of the result-eq20}) with $f$ admitting several equilibrium states;
      Example 4.1 of  \cite{Comman(2009)NON22} furnishes such an example  when  $(\Omega,\tau)$ is given by the iteration of a hyperbolic rational map;  Theorem 5.7 of  \cite{Comman(2009)NON22} provides other two   examples  when $(\Omega,\tau)$  is the multidimensional full shift.
      
     $\bullet$ Example 4.1  and Theorem 5.7 (a) of  \cite{Comman(2009)NON22}  both concern nets $(\nu_{f,\alpha},t_\alpha)$
       of the form 
      \begin{equation}\label{LDP-separated-sets-eq20}
\nu_{f,\alpha}=\sum_{\xi\in
\Omega_\alpha}
\frac{e^{\sum_{x\in\Lambda_\alpha} f(\tau^x\xi)}}
{\sum_{\xi'\in
\Omega_\alpha}  e^{\sum_{x\in\Lambda_\alpha}f(\tau^x\xi')}}
\delta_{\frac{1}{\mid\Lambda_\alpha\mid}\sum_{x\in
 \Lambda_\alpha}\delta_{\tau^x(\xi)}}.
 \end{equation}
 and 
 \[t_\alpha=\mid\Lambda_\alpha\mid ^{-1},\] 
 where $f$ is an arbitrary element of $C(\Omega)$, $(\Lambda_\alpha)$   a van Hove net of nonempty finite subsets of $\N^l$ for some $l\in\N$,  $\mid\Lambda_\alpha\mid$  the cardinality of $\Lambda_\alpha$,  and 
 $\Omega_\alpha$  a maximal  $(\varepsilon,\Lambda_\alpha)$-separated set for some $\varepsilon$ small enough;
 the  expansiveness property that holds in these examples makes   (\ref{Introduction and statement of the result-eq20})   easy to establish;   we present here the general case   (Example \ref{LDP-general-case-separated-sets}). 
 
 $\bullet$ The example given in Theorem 5.7(b) of  \cite{Comman(2009)NON22} deals with nets $(\nu_{f,\alpha},t_\alpha)$ similar to the above case but where $\alpha\in\N^l$, $\Lambda_\alpha$ is the parallelepiped whose angles are determined by $\alpha$, 
  and $\Omega_\alpha$ is the set of $\alpha$-periodic configurations; 
    Example \ref{LDP-general-case-periodic-points} 
    extends this case   to subshifts of finite type satisfying strong specification, 
  recovering 
    Theorem C of  \cite{Eiz-Kif-Weis-94-CMP} in a very direct way.

\subsection{Basic examples}\label{basis-examples}
   It should be pointed out that in all the examples below,  in addition to (D),  the proofs appearing  in the cited articles
  require   highly technical and or theoretical (\eg Shannon-Mac-Millan theorem) intermediate results,  while 
     once (D) is established,  the  large deviation principle follows by  a  straightforward application of Theorem 5.2 of  \cite{Comman(2009)NON22}; 
     this is in particular the case of Theorem A and Theorem C of \cite{Eiz-Kif-Weis-94-CMP}, which  follow from  Theorem B of \cite{Eiz-Kif-Weis-94-CMP}  together with Theorem 5.2 of  \cite{Comman(2009)NON22} (\cf b) and c)  below, respectively); the last case is detailed  in 
        Example \ref{LDP-general-case-periodic-points}.

\begin{nitemize}
\item[a)] In  statistical mechanics, the use of (D)   dates back to the proof of the large deviation principle for Gibbs random  fields on $\Z^l$ for some $l\in\N$ (\cite{Follmer-88-AnnProb}), where the underlying dynamical system $(\Omega,\tau)$ is the $l$-dimensional full shift and the function $f$ as in (\ref{Introduction and statement of the result-eq20}) is  the local energy function  associated with some translation invariant  summable interaction $\phi$; more precisely, given a  van Hove sequence   $(\Lambda_n)$  of finite subsets of $\Z^l$,
one considers the sequence  $(\nu_n,t_n)$, where  $t_n=\mid\Lambda_n\mid^{-1}$ and  $\nu_n$ is
the  distribution of the   field
\[\Omega\ni\xi\mapsto\frac{1}{\mid\Lambda_n\mid}\sum_{x\in\Lambda_n}\delta_{\tau^x\xi}\]
induced by an equilibrium state  $\mu_\phi$ for $f_\phi$ \ie
\begin{equation}\label{Birkhoff-average-meca-stat}
\forall n\in\N,\ \ \ \ \ \ \ \nu_n(\cdot)=\mu_{f_\phi}(\{\xi\in\Omega:\frac{1}{\mid\Lambda_n\mid}\sum_{x\in\Lambda_n}
\delta_{\tau^x \xi}\in\cdot\}). 
\end{equation}
The fact that (D) holds  for  full shifts  is known for a long time (\cite{Israel-79}, Lemma IV,  3.2); refinements have been given in \cite{Gurevich-Tempelman-05-PTRF}  showing that the approximating measures may be chosen as full-supported  equilibrium measures for local energy functions associated with invariant short-range interactions.

\item[b)] The   foregoing   is a particular case of a general result for   $\Z^l$-actions with upper semi-continuous entropy:  indeed,  Theorem B of  \cite{Eiz-Kif-Weis-94-CMP} asserts that if such a system satisfies the  weak specification property,  then (D) holds;
in \cite{Eiz-Kif-Weis-94-CMP} the authors apply this result to   prove the large deviation principle for the same sequence $(\nu_n,t_n)$ as in  $a)$ above,  but  in the more general case  where $(\Omega,\tau)$ is a 
 subshift of finite type satisfying weak specification  (\cite{Eiz-Kif-Weis-94-CMP}, Theorem A); it turns out that for these  sequences,  
 the weak specification also implies  (\ref{Introduction and statement of the result-eq20}) (\ie  equation (2.23) of \cite{Eiz-Kif-Weis-94-CMP}). In fact, \cite{Eiz-Kif-Weis-94-CMP} deals with the more  general  van Hove net constituted by all finite subsets of $\Z^l$ ordered by inclusion, 
  case which  can be   reduced   to the one   of sequences (thanks to the weak specification).

 \item[c)] In \cite{Eiz-Kif-Weis-94-CMP} the large deviation principle is also proved for subshifts of finite type satisfying strong specification
 and for  the  net $(\nu_a,t_a)_{a\in\N^l}$ given  for each $a\in\N^l$ by
  \[\nu_a=\frac{1}{\mid\textnormal{Per}_a\mid}\sum_{\xi\in\textnormal{Per}_a}\delta_{\frac{1}{\mid\Lambda(a)\mid}\sum_{x\in
 \Lambda(a)}\delta_{\tau^x(\xi)}}\]
 and
 \[t_a=\mid\Lambda(a)\mid^{-1},\]
where   $\textnormal{Per}_a$ denotes the set of $a$-periodic configurations (\cite{Eiz-Kif-Weis-94-CMP}, Theorem C). 
The possible several equilibrium states for $f=0$ as an obstacle to the application of the first condition (\ie results of \cite{Kifer-TAMS-90}) has been noted by the authors who, instead,  apply the  large deviation principle obtained previously in the above case $b)$ (namely, Theorem A of \cite{Eiz-Kif-Weis-94-CMP}).

\item[d)] In one dimensional  dynamics, let us consider the  system  $(\Omega,\tau)$ constituted  by the iteration of a rational map $T$ of degree at least two (\cite{Beardon-91});  more precisely, 
$\Omega$ is the Julia set of $T$ endowed with the induced chordal metric, and  the action   $\tau$ is defined by
\[\N\cup\{0\}\ni n\mapsto\tau(n) = (T_{\mid\Omega})^{n};\]
such a system  has a unique measure of maximal entropy (\cite{Ljubich-83-ETDS-3}); 
we assume furthermore that $T$ fulfils a weak  form of hyperbolicity,  the so-called  ÒTopological Collet-EckmanÓ (TCE) condition: There exists $\lambda> 1$ such that every periodic point $p\in\Omega$ with period $n$ satisfies
\begin{equation}\label{Introduction and statement of the result-eq40}
\mid (T^n)'(p)\mid\ge\lambda^n.
\end{equation}
(see Main Theorem of \cite{Przytycki_Rivera-Letelier_Smirnov(2003)InventMath151}
 for other equivalent definitions).  It is known that (D)  holds when $T$ is  hyperbolic (\ie  when (\ref{Introduction and statement of the result-eq40}) holds for all $p\in\Omega$) (\cite{lopes-90-NON3}, Theorem 8); 
 in \cite{lopes-90-NON3} the author uses (D) to prove the large deviation principle for the  sequence  $(\nu_n,n^{-1})$, where $\nu_n$ is   the  distribution of the  Birkhoff averages with respect to the  measure of maximal entropy $\mu_0$, \ie
\[\forall n\in\N,\ \ \ \ \ \ \ 
\nu_n (\cdot)=\mu_0
\{\xi\in\Omega:\frac{1}{n}\sum_{k=0}^{n-1}
\delta_{\tau^k \xi}\in\cdot\}.
\]

\item[e)]  The above example $d)$ has been  our  starting point in  \cite{Comman(2009)NON22} leading to the general Theorem 5.2. 
In the case of the multidimensional full shift, the  example of Theorem 5.7(b) above mentioned  in  \S \ref{Advantage of (D) over the first condition} 
generalizes also 
          Theorem 3.5 and  Theorem 4.2 of \cite{Olla-88-PTRF}  in the  case of  a finite spin space with uniform  single spin measure, 
by allowing $f$ arbitrary in $C(\Omega)$ (and not only $f=0$) (\cf Remark 5.8 of \cite{Comman(2009)NON22} for more details).
 Other  examples  are given in \S 4 of \cite{Comman(2009)NON22} (and in particular  those considered in \cite{Pollicott-Sridharan-07-JDSGT} and \cite{pol_CMP_96}, where only the large deviation upper bounds are proved) but  where $f$ is assumed  to have a unique equilibrium state and thus for which the large deviation principle can be proved as well 
 using the first condition (namely, Theorem C together with Remark B.2 of \cite{Comman_Rivera-Letelier(2010)ETDS31});  see also Remarks 5.4 and 5.5 of \cite{Comman(2009)NON22}.
 \end{nitemize}

\section{Results}\label{results-examples}

Our main result is the following.

\begin{theorem}\label{Kifer-implies-approx}
   Let $(\Omega,\tau)$ be  a dynamical system as in \S \ref{systems considered}. Let $f\in C(\Omega)$. We assume that   there exists a 
   vector space $V$ dense in $C(\Omega)$ such that $f+g$ has a  unique equilibrium state for all $g\in V$. 
      Then,   for each $\mu\in\mathcal{M}^{\tau}(\Omega)$  there exists a sequence    $(\mu_n,g_n)$ in $\mathcal{M}^{\tau}(\Omega)\times V$   such that
 $\lim_n\mu_n=\mu$, $\lim_n h^{\tau}(\mu_n)=h^{\tau}(\mu)$ and $\mu_n$ is the equilibrium state for $f+g_n$ for all $n\in\N$.
 \end{theorem}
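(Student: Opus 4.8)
The plan is to translate the statement into convex analysis on the separable Banach space $C(\Omega)$ and to read off the approximants as gradients of the pressure. Recall from the variational principle \eqref{subsection-Thermodynamic formalism-eq20} that $P^\tau$ is finite, convex and $1$-Lipschitz on $C(\Omega)$, and that a measure $\mu\in\mathcal{M}^\tau(\Omega)$ is an equilibrium state for $\phi$ exactly when $\mu\in\partial P^\tau(\phi)$, the subdifferential of $P^\tau$ at $\phi$; moreover $h^\tau(\mu)=P^\tau(\phi)-\mu(\phi)$ for any such $\mu$. Since a continuous convex function has a singleton subdifferential at $\phi$ if and only if it is G\^{a}teaux differentiable there, the hypothesis says precisely that $P^\tau$ is G\^{a}teaux differentiable at every point of the dense affine subspace $f+V$, with $DP^\tau(f+g)=\mu_{f+g}$ the unique equilibrium state. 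By Fenchel duality applied to \eqref{subsection-Thermodynamic formalism-eq20} one also has $h^\tau(\mu)=\inf_{\phi\in C(\Omega)}\{P^\tau(\phi)-\mu(\phi)\}$ for every $\mu\in\mathcal{M}^\tau(\Omega)$. Thus the theorem is equivalent to the assertion that the pairs $(\mu_{f+g},h^\tau(\mu_{f+g}))$, $g\in V$, are dense in the graph $\{(\mu,h^\tau(\mu)):\mu\in\mathcal{M}^\tau(\Omega)\}$ for the product of the weak-$^*$ topology and the usual topology on $\mathbb{R}$.

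The first step is to approximate $\mu$ by genuine subgradients without losing entropy. Fixing $\mu$, I apply Ekeland's variational principle (equivalently the Br{\o}ndsted--Rockafellar theorem) to the convex continuous functional $\phi\mapsto P^\tau(\phi)-\mu(\phi)$, whose infimum equals $h^\tau(\mu)$. A two-parameter version produces, for every $\varepsilon>0$, a point $\psi\in C(\Omega)$ and a measure $\tilde\mu\in\partial P^\tau(\psi)$ with $\|\tilde\mu-\mu\|$ arbitrarily small and, crucially, with $|h^\tau(\tilde\mu)-h^\tau(\mu)|\le\varepsilon$. The point of this step is that the variational principle controls the cross term $(\tilde\mu-\mu)(\psi)$, which is exactly what obstructs a naive minimising sequence: choosing $g_n$ with $P^\tau(f+g_n)-\mu(f+g_n)\to h^\tau(\mu)$ and reading off $\mu_n=\mu_{f+g_n}$ does give $\mu_n\to\mu$ weak-$^*$, but the entropies may drop, precisely because $\|g_n\|$ can blow up. The variational principle repairs this and guarantees $h^\tau(\tilde\mu)\to h^\tau(\mu)$, that is, no entropy loss.

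The second step transfers such a subgradient to a gradient whose potential lies in $f+V$. Since $f+V\subseteq C(\Omega)$ is dense and consists of differentiability points of $P^\tau$, I approximate $\psi$ in norm by potentials $\phi_k\in f+V$; each $\mu_{\phi_k}=DP^\tau(\phi_k)$ is the unique equilibrium state for $\phi_k$. Upper semicontinuity of the subdifferential for the norm-to-weak-$^*$ topologies forces every weak-$^*$ cluster point of $(\mu_{\phi_k})$ to lie in $\partial P^\tau(\psi)$, and along a subsequence $\mu_{\phi_k}\to\mu^\ast$ for some $\mu^\ast\in\partial P^\tau(\psi)$; continuity of $P^\tau$ together with $\|\phi_k-\psi\|\to 0$ then yields $h^\tau(\mu_{\phi_k})=P^\tau(\phi_k)-\mu_{\phi_k}(\phi_k)\to P^\tau(\psi)-\mu^\ast(\psi)=h^\tau(\mu^\ast)$, so both the measure and its entropy pass to the limit along potentials in $f+V$. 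A diagonal argument over $\varepsilon\to 0$ and $k\to\infty$ would then assemble the required sequence $(\mu_n,g_n)\in\mathcal{M}^\tau(\Omega)\times V$.

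The main obstacle is that this transfer only reaches the cluster measure $\mu^\ast\in\partial P^\tau(\psi)$, which a priori need not be the subgradient $\tilde\mu$ produced in the first step, and hence need not be close to $\mu$: gradients at nearby potentials can converge only to extreme points of $\partial P^\tau(\psi)$, never to an interior one. Resolving this is the heart of the argument, and is where the linear (not merely dense) structure of $V$ enters. Because $P^\tau$ is G\^{a}teaux differentiable on the whole dense subspace $f+V$, the set $S$ of weak-$^*$ limits of gradients $\mu_{\phi_k}$, with $\phi_k\to\psi$ inside $f+V$, is weak-$^*$ closed and has $\partial P^\tau(\psi)$ as its closed convex hull, so by Milman's theorem every extreme point of $\partial P^\tau(\psi)$ belongs to $S$. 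One therefore has to arrange the variational step so that the approximating subgradient $\tilde\mu$ is an extreme point of $\partial P^\tau(\psi)$ lying near $\mu$; the affineness and upper semicontinuity of $h^\tau$, together with the Poulsen-simplex structure of $\mathcal{M}^\tau(\Omega)$ forced by the hypothesis, are what make this possible with the entropy value preserved. This selection of an exposed (extreme) subgradient near $\mu$ with matching entropy is the delicate point on which the whole proof turns.
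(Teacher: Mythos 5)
Your proposal sets up the right convex-analytic framework (equilibrium states as subgradients of $P^\tau$, the hypothesis as G\^{a}teaux differentiability on the dense affine set $f+V$, Br\o ndsted--Rockafellar to upgrade approximate subgradients), but it does not close. The decisive step --- transferring the subgradient $\tilde\mu\in\partial P^\tau(\psi)$ produced by the variational principle to a limit of \emph{unique} equilibrium states $\mu_{\phi_k}$ with $\phi_k\in f+V$, while keeping both the measure and the entropy close to those of $\mu$ --- is exactly the point you flag at the end as ``the delicate point on which the whole proof turns,'' and you do not carry it out. The proposed repair is not available as stated: Br\o ndsted--Rockafellar gives no control over where $\tilde\mu$ sits inside the face $\partial P^\tau(\psi)$, an arbitrary $\mu$ (hence any $\tilde\mu$ norm-close to it) may be weak-$^*$ far from every extreme point of that face, and invoking the Poulsen-simplex structure of $\mathcal{M}^\tau(\Omega)$ is circular, since the weak-$^*$ density of unique equilibrium states is part of what is being proved (and even granting it, matching the entropy in the limit is the actual content of the theorem). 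A secondary, fixable issue: in your first step the bound on $h^\tau(\tilde\mu)-h^\tau(\mu)$ involves the cross term $(\mu-\tilde\mu)(\psi)$ with $\|\psi\|$ depending on the accuracy sought, and you assert rather than show that the two Br\o ndsted--Rockafellar parameters can be balanced against $\|\psi\|$.

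The paper avoids this obstruction entirely by never trying to approximate $\mu$ in one shot in the infinite-dimensional duality. It fixes a countable dense set $\{g_n\}\subset V$, passes to the finite-dimensional marginals $x_n=p_{S_n}(\mu)\in\R^n$ with $S_n=(g_1,\dots,g_n)$, and applies Br\o ndsted--Rockafellar to the finite-dimensional conjugate $L_{S_n}^*=I_{S_n}$ at $x_n$. At a point of subdifferentiability $x_{n,m}$ with subgradient $t_{n,m}$, the Fenchel equality forces $x_{n,m}=p_{S_n}(\mu_{n,m})$ where $\mu_{n,m}$ is the (unique, since $t_{n,m}S_n\in V$) equilibrium state for $f+t_{n,m}S_n$ \emph{and} $Q^*(\mu_{n,m})=L^*_{S_n}(x_{n,m})$; so the selection problem you face never arises --- the pairing hands you the correct measure together with its exact $Q^*$-value. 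Entropy convergence then comes from a squeeze: $\lim_m Q^*(\mu_{n,m})=L^*_{S_n}(x_n)=I_{S_n}(x_n)\le Q^*(\mu)$ gives the upper bound, lower semicontinuity of $Q^*$ at the weak-$^*$ limit gives the lower bound, and an iterated-limits lemma over the directed set $\N\times\N^{\N}$ assembles the double sequence into a net converging to $(\mu,h^\tau(\mu))$. If you want to salvage your route, you would need to replace the extreme-point argument by some such mechanism that ties the approximating potential, its unique equilibrium state, and the value of $Q^*$ together in a single identity; the finite-dimensional contraction is what supplies that in the paper.
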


The proof of Theorem \ref{Kifer-implies-approx} will be given after establishing a few lemmas.

\begin{lemma}\label{lemma-conv-net}
Let $J$ and $L$ be  directed sets,  let $s$ be a real-valued function on $J\times L$, let  $\wp$ denote the set $J\times L^{J}$ pointwise directed,  let 
  $(s_i)_{i\in\wp}$ be the  net in $\R$ defined by putting $s_i=s(j,u(j))$ for all  $i=(j,u)\in\wp$. For each  $r\in\R$ we have
\[\limsup_i\ s_i\le
r\Longleftrightarrow\limsup_j\limsup_l\
s({j,l})\le r;\]
in particular,
\[\lim_i\ s_i=r\Longleftrightarrow\liminf_j\liminf_l\
s({j,l})=\limsup_j\limsup_l\
s({j,l})= r.\]
\end{lemma}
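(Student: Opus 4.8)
The plan is to prove the stated equivalence for $\limsup$ directly, and then to obtain both the equality of the iterated and net $\limsup$'s and the ``in particular'' clause as formal consequences. Write $\phi(j)=\limsup_l s(j,l)=\inf_{l_0\in L}\sup_{l\ge l_0}s(j,l)$, so that the right-hand side of the equivalence reads $\limsup_j\phi(j)\le r$. Recall that the order on $\wp$ is the product order: $(j_1,u_1)\le(j_2,u_2)$ iff $j_1\le j_2$ in $J$ and $u_1\le u_2$ pointwise on $J$; since $L$ is directed, $L^J$ is directed pointwise and $\wp$ is directed as a product of directed sets. Throughout, the only fact I use about $\limsup$ of a net $(a_k)_{k\in K}$ is that $\limsup_k a_k\le r$ is equivalent to: for every $\varepsilon>0$ there is $k_0$ with $a_k\le r+\varepsilon$ for all $k\ge k_0$.

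For the implication $\limsup_j\limsup_l s(j,l)\le r\Rightarrow\limsup_i s_i\le r$, fix $\varepsilon>0$ and choose $j_0\in J$ with $\phi(j)\le r+\varepsilon/2$ for all $j\ge j_0$. For each such $j$, since $\limsup_l s(j,l)\le r+\varepsilon/2$, I pick $l_0(j)\in L$ with $s(j,l)\le r+\varepsilon$ for all $l\ge l_0(j)$; for the remaining $j$ I set $l_0(j)$ to be any fixed element of $L$. This choice function $u_0:=l_0$ is an element of $L^J$, and with $i_0:=(j_0,u_0)$ every $i=(j,u)\ge i_0$ satisfies $j\ge j_0$ and $u(j)\ge u_0(j)=l_0(j)$, whence $s_i=s(j,u(j))\le r+\varepsilon$. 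Thus $\limsup_i s_i\le r+\varepsilon$, and letting $\varepsilon\to0$ gives the claim. The essential point is that a coordinatewise choice of tail thresholds---a priori one element of $L$ for each $j$---can be packaged into the \emph{single} index $u_0\in L^J$, which is exactly what the pointwise direction is designed to exploit.

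For the converse $\limsup_i s_i\le r\Rightarrow\limsup_j\limsup_l s(j,l)\le r$, fix $\varepsilon>0$ and choose $i_0=(j_0,u_0)\in\wp$ with $s(j,u(j))\le r+\varepsilon$ for all $(j,u)\ge i_0$. I claim $\phi(j)\le r+\varepsilon$ for every $j\ge j_0$: given such a $j$ and any $l\ge u_0(j)$, define $u\in L^J$ by $u(j)=l$ and $u(j')=u_0(j')$ for $j'\ne j$; then $u\ge u_0$ pointwise and $(j,u)\ge i_0$, so $s(j,l)=s(j,u(j))\le r+\varepsilon$. Hence $\sup_{l\ge u_0(j)}s(j,l)\le r+\varepsilon$, and a fortiori $\phi(j)=\inf_{l_0}\sup_{l\ge l_0}s(j,l)\le r+\varepsilon$. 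Therefore $\limsup_j\phi(j)\le r+\varepsilon$, and $\varepsilon\to0$ closes this direction. Here the device is dual to the previous one: to probe $s(j,\cdot)$ at a single value $l$ one modifies the witness $u_0$ in the single coordinate $j$, which is legitimate precisely because the order on $L^J$ is pointwise.

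Finally, since the equivalence $\limsup_i s_i\le r\Leftrightarrow\limsup_j\limsup_l s(j,l)\le r$ holds for every $r\in\R$, the two extended reals must coincide, giving $\limsup_i s_i=\limsup_j\limsup_l s(j,l)$. Applying this identity to $-s$ and using $\liminf=-\limsup(-\,\cdot\,)$ throughout yields $\liminf_i s_i=\liminf_j\liminf_l s(j,l)$. The ``in particular'' statement is then immediate, since $\lim_i s_i=r$ means exactly $\liminf_i s_i=\limsup_i s_i=r$. I do not expect any genuinely hard step: the conceptual heart is the interchange between a coordinatewise selection over $J$ and a single element of $L^J$, which appears in the two directions in its two dual forms. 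The only real care needed is bookkeeping---defining the choice function $u_0$ on all of $J$ rather than merely on the tail $\{j\ge j_0\}$, and verifying the product-order inequalities---and no properties of $J$, $L$, or $s$ beyond directedness are used.
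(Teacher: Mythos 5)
Your proof is correct and follows essentially the same route as the paper's: the forward implication is obtained by perturbing the witness $u_0$ in a single coordinate to probe $s(j,\cdot)$ at an arbitrary $l\ge u_0(j)$ (the paper phrases this as a contradiction, you phrase it directly), and the reverse implication by packaging the coordinatewise tail thresholds $l_0(j)$ into a single element of $L^J$, extended arbitrarily off the tail $\{j\ge j_0\}$, exactly as the paper does. The derivation of the ``in particular'' clause via $\liminf=-\limsup(-\,\cdot\,)$ also matches the paper.
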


\begin{proof}
Let $\delta>0$. First assume that $\limsup_i\ s_i\le r$. There
exists $j_0\in J$ and $u_0\in L^J$
such that $s({j,u(j)})<r+\delta$  for all
$(j,u)$ greater than or equal  $(j_0,u_0)$. Suppose that
$\limsup_j\limsup_l\ s({j,l})>r+\delta$. There exists   $(j_1,l_1)$ in $J\times L$ with $j_1$ (resp. $l_1$)   greater than or equal $j_0$ (resp. $u_0(j_1)$)   such that     $s({j_1, l_1})>r+\delta$. Putting
  $u_1(j_1)=l_1$ and $u_1(j)=u_0(j)$   for all $j\in J\setminus\{l_1\}$,  we get  an element $(j_1,u_1)\in\wp$ greater than or equal $(j_0,u_0)$ fulfilling
$s({j_1,u_1(j_1)})>r+\delta$, which gives  the
contradiction. Therefore, we have  $\limsup_j\limsup_l\
s({j,l})\le r+\delta$ hence $\limsup_j\limsup_l\
s({j,l})\le r$  since $\delta$ is
arbitrary. 

Assume now that  $\limsup_j\limsup_l\
s({j,l})\le r$.  There exists $j_0\in J$  and for each $j\in J$ greater than or equal $j_0$ there exists 
  $u_0(j)\in L$ such that  
$s({j,l})<r+\delta$ for all $j$ and $l$  greater than or equal $j_0$ and $u_0(j)$, respectively.
Putting  $u_0(j)=u_0(j_0)$  for all  $j$ lesser than  $j_0$, we get an element  $(j_0,u_0)\in\wp$ such that 
$s({j,u(j)})<r+\delta$ for all $(j,u)\in\wp$ greater than or equal $(j_0,u_0)$; therefore,  $\limsup_i s_i\le r+\delta$ hence 
$\limsup_i s_i\le r$  since $\delta$ is
arbitrary. The first assertion is proved; the second assertion is a direct consequence since 
 $\liminf_i\ s_i\ge r$ if and only if $-\limsup_i\ -s_i\ge r$  if and only if $\limsup_i\ -s_i\le -r$ if and only if $\limsup_j\limsup_l\
-s({j,l})\le -r$ if and only if $-\liminf_j\liminf_l\
s({j,l})\le -r$ if and only if $\liminf_j\liminf_l\
s({j,l})\ge r$ (where the third equivalence follows from the  first assertion applied to the net $(-s_i)$ and the real  $-r$).
\end{proof}

Let $f\in C(\Omega)$. Let $Q$ be the map defined on $C(\Omega)$ by 
\[\forall g\in C(\Omega),\ \ \ \ \ \ \ \ \ Q(g)=P^{\tau}(f+g)-P^{\tau}(f).\]

\begin{lemma}\label{Fenchel-Legendre-transform-of-Q}
The function $Q$ is proper convex and continuous;  its Fenchel-Legendre transform $Q^*$  has effective domain 
 $\mathcal{M}^{\tau}(\Omega)$ and fulfils 
\[\forall\mu\in\mathcal{M}^{\tau}(\Omega),\ \ \ \ \ \ \ \  Q^*(\mu)=P^{\tau}(f)-h^{\tau}(\mu)-\mu(f).\]
In particular,  $Q^*$ vanishes exactly on the set of equilibrium states for $f$. 
\end{lemma}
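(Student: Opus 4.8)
The plan is to reduce the entire statement to the variational principle \eqref{subsection-Thermodynamic formalism-eq20} by a Fenchel--Moreau duality argument, viewing $C(\Omega)$ in duality with the space $\mathcal{M}(\Omega)$ of signed Radon measures (its topological dual, by the Riesz representation theorem) equipped with the weak-$^*$ topology. The properties of $Q$ itself are immediate: since $P^{\tau}$ is finite, convex and continuous on $C(\Omega)$ (\S\ref{Pressure and equilibrium state}), the map $Q(g)=P^{\tau}(f+g)-P^{\tau}(f)$ is a composition of $P^{\tau}$ with the translation $g\mapsto f+g$ followed by a vertical shift, hence is again finite, convex and continuous, and in particular proper. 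For the transform, the substitution $h=f+g$ gives, for every $\mu\in\mathcal{M}(\Omega)$,
\[
Q^{*}(\mu)=\sup_{g\in C(\Omega)}\{\mu(g)-P^{\tau}(f+g)\}+P^{\tau}(f)=(P^{\tau})^{*}(\mu)-\mu(f)+P^{\tau}(f),
\]
so the whole statement reduces to the computation of $(P^{\tau})^{*}$.

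The key step is therefore to identify $(P^{\tau})^{*}$. I would introduce the function $\Phi$ on $\mathcal{M}(\Omega)$ equal to $-h^{\tau}(\nu)$ for $\nu\in\mathcal{M}^{\tau}(\Omega)$ and to $+\infty$ otherwise. The variational principle \eqref{subsection-Thermodynamic formalism-eq20} says exactly that $\Phi^{*}=P^{\tau}$ in the pairing between $\mathcal{M}(\Omega)$ and $C(\Omega)$, whence $(P^{\tau})^{*}=\Phi^{**}$. Now $\Phi$ is proper (as $\mathcal{M}^{\tau}(\Omega)\neq\emptyset$ and $h^{\tau}$ is finite there), convex (because $\mathcal{M}^{\tau}(\Omega)$ is convex and $h^{\tau}$ is concave on it, a standard property of the entropy), and lower semi-continuous (because $\mathcal{M}^{\tau}(\Omega)$ is weak-$^*$ closed and $h^{\tau}$ is upper semi-continuous by hypothesis). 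The Fenchel--Moreau theorem then gives $\Phi^{**}=\Phi$, that is
\[
(P^{\tau})^{*}(\mu)=
\begin{cases}
-h^{\tau}(\mu),&\mu\in\mathcal{M}^{\tau}(\Omega),\\
+\infty,&\mu\in\mathcal{M}(\Omega)\setminus\mathcal{M}^{\tau}(\Omega).
\end{cases}
\]

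Substituting this into the displayed formula for $Q^{*}$ yields $Q^{*}(\mu)=P^{\tau}(f)-h^{\tau}(\mu)-\mu(f)$ for $\mu\in\mathcal{M}^{\tau}(\Omega)$ and $Q^{*}(\mu)=+\infty$ elsewhere; since $h^{\tau}$ is finite, $Q^{*}$ is finite exactly on $\mathcal{M}^{\tau}(\Omega)$, which is thus its effective domain. Finally, the variational principle gives $Q^{*}(\mu)=P^{\tau}(f)-\bigl(\mu(f)+h^{\tau}(\mu)\bigr)\ge 0$ on $\mathcal{M}^{\tau}(\Omega)$, with equality precisely when $\mu$ attains the supremum defining $P^{\tau}(f)$, i.e. precisely when $\mu$ is an equilibrium state for $f$; hence $Q^{*}$ vanishes exactly on the set of equilibrium states for $f$.

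The main obstacle is the identification of $(P^{\tau})^{*}$, and inside it the ``hard'' inequality $(P^{\tau})^{*}(\mu)\ge -h^{\tau}(\mu)$ for invariant $\mu$: the opposite bound $(P^{\tau})^{*}(\mu)\le -h^{\tau}(\mu)$ is an immediate consequence of the variational principle, but equality is exactly what Fenchel--Moreau delivers, resting on the concavity and upper semi-continuity of $h^{\tau}$ together with the closedness of $\mathcal{M}^{\tau}(\Omega)$. If one prefers to avoid biduality, the same conclusion can be reached by hand: the upper bound is as above, and $(P^{\tau})^{*}(\mu)=+\infty$ off $\mathcal{M}^{\tau}(\Omega)$ follows by testing against suitable functions — constants to detect total mass $\neq 1$ (using $P^{\tau}(h+c)=P^{\tau}(h)+c$), a nonnegative function of negative $\mu$-integral to detect non-positivity, and a coboundary $u-u\circ\tau^{a}$ with $\mu(u-u\circ\tau^{a})\neq 0$, whose pressure is unchanged under scaling since invariant measures annihilate it, to detect non-invariance.
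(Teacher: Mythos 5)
Your proposal is correct and follows essentially the same route as the paper: both compute $Q^*$ by recognizing the pressure as the conjugate of the function equal to minus the entropy on $\mathcal{M}^{\tau}(\Omega)$ and $+\infty$ elsewhere, and then invoking the biconjugation (Fenchel--Moreau) theorem, which applies because that function is proper, convex and lower semi-continuous thanks to the upper semi-continuity and affinity (you say concavity, which also suffices) of $h^{\tau}$ and the weak-$^*$ closedness of $\mathcal{M}^{\tau}(\Omega)$. The only cosmetic difference is that the paper absorbs $f$ into the auxiliary function $U(\mu)=-\mu(f)-h^{\tau}(\mu)$ from the start, whereas you factor $f$ out by a change of variables and apply biduality to the $f=0$ case; these are trivially equivalent.
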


\begin{proof}
Clearly, $Q$ is proper  convex and continuous  since $P^{\tau}$ and $\widehat{f}$ are (\cf \S \ref{Pressure and equilibrium state}).  Let $\widetilde{\mathcal{M}}(\Omega)$ denote the space of signed Radon measures on $\Omega$ endowed with the weak-$^*$ topology. 
 Putting
\[
U(\mu)=\left\{
\begin{array}{ll}
-\mu(f)-h^{\tau}(\mu) & \textnormal{if $\mu\in\mathcal{M}^{\tau}(\Omega)$}
\\ 
+\infty & \textnormal{if $\mu\in\widetilde{\mathcal{M}}(\Omega)\setminus \mathcal{M}^{\tau}(\Omega)$},
\end{array}
\right.
\]
 we have
\[P^{\tau}(f+g)=\sup_{\mathcal{M}^{\tau}(\Omega)}\{\mu(f+g)+h^{\tau}(\mu)\}=
\sup_{\mu\in\widetilde{\mathcal{M}}(\Omega)}
\{\mu(g)-U(\omega)\}.\]
Since $h^{\tau}$ is bounded  affine and upper semi-continuous,  $U$ is proper convex and  lower semi-continuous; consequently, we have $U=U^{**}$,  \ie 
\[\forall \mu\in\widetilde{\mathcal{M}}(\Omega),\ \ \ \ \ \ \ U(\mu)=\sup_{g\in C(\Omega)}\{\mu(g)-P^{\tau}(f+g)\}=
\sup_{g\in C(\Omega)}\{\mu(g)-P^{\tau}(f)-Q(g)\}=\]
\[-P^{\tau}(f)+\sup_{g\in C(\Omega)}\{\mu(g)-Q(g)\}=-P^{\tau}(f)+Q^*(\mu),\]which
proves the lemma. 
 \end{proof}

  For each
$d\in\N$, each $S=(g_1,...,g_d)\in C(\Omega)^d$ and each $t=(t_1,...,t_d)$
in $\mathbb{R}^d$, let  $tS$  denote the function
 $\sum_{i=1}^d t_i g_i$,  put
  $L_S(t)=P^{\tau}(f+tS)-P^{\tau}(f)$, let
$p_S:\mathcal{M}(\Omega)\rightarrow\mathbb{R}^d$ defined by
$p_S(\mu)=(\mu(g_1),...,\mu(g_d))$ for all $\mu\in\mathcal{M}(\Omega)$, and let
 $I_S:\mathbb{R}^d\rightarrow[0,+\infty]$ defined by
 \begin{displaymath}\label{ext-Kifer-eq10}
I_S(x)=\left\{
\begin{array}{ll}
\inf\{Q^*(\mu):\mu\in\mathcal{M}^{\tau}(\Omega),p_S(\mu)=x\} &
\textnormal{if $x\in p_S(\mathcal{M}^{\tau}(\Omega))$}
\\
+\infty & \textnormal{otherwise};
\end{array}
\right.
\end{displaymath}
note that since $\mathcal{M}^{\tau}(\Omega)$ is compact and $Q^*$  is lower semi-continuous and real-valued on $\mathcal{M}^{\tau}(\Omega)$ (\cf Lemma \ref{Fenchel-Legendre-transform-of-Q}), for each  $x\in p_S(\mathcal{M}^{\tau}(\Omega)$ there exists   $\mu_x\in\mathcal{M}^{\tau}(\Omega)$ such that  
$I_S(x)=Q^*(\mu_x)$. Let $(d,S)\in\N\times C(\Omega)^d$.

\begin{lemma}\label{IS-convex-lsc}
The function 
 $I_S$ is proper convex and lower semi-continuous. 
 \end{lemma}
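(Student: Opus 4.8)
The plan is to recognize $I_S$ as the infimal projection (image) of the convex function $Q^*$ under the linear map $p_S$, and to verify the three required properties separately. Since $p_S$ is linear and $\mathcal{M}^{\tau}(\Omega)$ is convex and compact, the set $p_S(\mathcal{M}^{\tau}(\Omega))$ is again convex and compact; moreover, by the remark preceding the lemma the infimum defining $I_S(x)$ is attained at some $\mu_x\in\mathcal{M}^{\tau}(\Omega)$ whenever $x\in p_S(\mathcal{M}^{\tau}(\Omega))$, and I will use this attainment throughout.

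For properness, observe that the variational formula (\ref{subsection-Thermodynamic formalism-eq20}) gives $P^{\tau}(f)\ge\mu(f)+h^{\tau}(\mu)$ for every $\mu\in\mathcal{M}^{\tau}(\Omega)$, so by Lemma \ref{Fenchel-Legendre-transform-of-Q} we have $Q^*\ge 0$ on $\mathcal{M}^{\tau}(\Omega)$. Since $Q^*$ is real-valued there and $\mathcal{M}^{\tau}(\Omega)$ is non-empty, $I_S$ takes at least one finite value and is everywhere $\ge 0$, hence proper.

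For convexity, fix $x_0,x_1\in p_S(\mathcal{M}^{\tau}(\Omega))$ and $\lambda\in[0,1]$, and choose $\mu_0,\mu_1\in\mathcal{M}^{\tau}(\Omega)$ with $p_S(\mu_i)=x_i$ and $Q^*(\mu_i)=I_S(x_i)$. The measure $\mu_\lambda=(1-\lambda)\mu_0+\lambda\mu_1$ again lies in $\mathcal{M}^{\tau}(\Omega)$, and linearity of $p_S$ gives $p_S(\mu_\lambda)=(1-\lambda)x_0+\lambda x_1$. Using the convexity of $Q^*$ (it is a Fenchel-Legendre transform) I obtain $I_S((1-\lambda)x_0+\lambda x_1)\le Q^*(\mu_\lambda)\le(1-\lambda)I_S(x_0)+\lambda I_S(x_1)$. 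Outside $p_S(\mathcal{M}^{\tau}(\Omega))$ the function equals $+\infty$, and since that set is convex the convexity inequality holds trivially in the remaining cases, so $I_S$ is convex on all of $\mathbb{R}^d$.

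The main point is lower semi-continuity, and here I would exploit the compactness of $\mathcal{M}^{\tau}(\Omega)$ together with the lower semi-continuity of $Q^*$. Because $\Omega$ is compact metrizable, $\mathcal{M}(\Omega)$ is metrizable, so I may argue sequentially. Let $x_n\to x$ in $\mathbb{R}^d$ and set $c=\liminf_n I_S(x_n)$; I may assume $c<+\infty$ and, after passing to a subsequence, that $I_S(x_n)\to c$, where finiteness forces each $x_n\in p_S(\mathcal{M}^{\tau}(\Omega))$. Choose attaining measures $\mu_n\in\mathcal{M}^{\tau}(\Omega)$ with $p_S(\mu_n)=x_n$ and $Q^*(\mu_n)=I_S(x_n)$. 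By compactness of $\mathcal{M}^{\tau}(\Omega)$ a subsequence $\mu_{n_k}$ converges weakly-$^*$ to some $\mu\in\mathcal{M}^{\tau}(\Omega)$; continuity of $p_S$ gives $p_S(\mu)=\lim_k x_{n_k}=x$, so $x\in p_S(\mathcal{M}^{\tau}(\Omega))$, and lower semi-continuity of $Q^*$ gives $Q^*(\mu)\le\liminf_k Q^*(\mu_{n_k})=c$. Hence $I_S(x)\le Q^*(\mu)\le c$, which is exactly the required inequality. The only subtlety — that the limiting point $x$ might a priori fall outside the effective domain $p_S(\mathcal{M}^{\tau}(\Omega))$ — is precisely what the compactness of $\mathcal{M}^{\tau}(\Omega)$ rules out, so I expect this to be the step demanding the most care.
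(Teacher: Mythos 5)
Your proof is correct and follows essentially the same route as the paper: lower semi-continuity via compactness of $\mathcal{M}^{\tau}(\Omega)$, continuity of $p_S$, and lower semi-continuity of $Q^*$ applied to (nearly) attaining measures, and convexity via linearity of $p_S$ together with convexity of $Q^*$. The only cosmetic differences are that the paper argues with nets and subnets where you invoke metrizability to work with sequences, and it works with infima rather than attained minimizers in the convexity step.
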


\begin{proof}
Let $x\in\R^d$,  let $(x_i)$ be a net converging to $x$, and
assume $\liminf I_S(x_i)<\delta$ for some real $\delta$. For some
subnet $(x_j)$ we have eventually $I_S(x_j)<\delta$ and so
$Q^*(\mu_j)<\delta$ for some $\mu_j\in\mathcal{M}^{\tau}(\Omega)$
satisfying $p_S(\mu_j)=x_j$. Let $(\mu'_j)$ be a subnet of $(\mu_j)$
converging to some $\mu'\in\mathcal{M}^{\tau}(\Omega)$; note
that $p_S(\mu')=x$. We have
\[I_S(x)\le Q^*(\mu')\le\liminf\  Q^*(\mu'_j)<\delta,\]
which proves the lower semi-continuity of $I_S$. For each $(x_1,x_2,\beta)\in\R^{2d}\times\ ]0,1[$
  we have
\[I_S(\beta x_1+(1-\beta)x_2)=\inf\{Q^*(\mu):
\mu\in\mathcal{M}^{\tau}(\Omega),p_S(\mu)=\beta
x_1+(1-\beta)x_2\}\]
\[\le\inf\{Q^*(\beta\mu_1+(1-\beta)\mu_2):\mu_1\in\mathcal{M}^{\tau}(\Omega),
\mu_2\in\mathcal{M}^{\tau}(\Omega),p_S(\mu_1)=x_1,p_S(\mu_2)=x_2\}\]
\[\le\inf\{\beta Q^*(\mu_1)+(1-\beta)Q^*(\mu_2):\mu_1\in\mathcal{M}^{\tau}(\Omega),
\mu_2\in\mathcal{M}^{\tau}(\Omega),p_S(\mu_1)=x_1,p_S(\mu_2)=x_2\}\]
\[=\beta I_S(x_1)+(1-\beta)I_S(x_2),\] hence $I_S$ is convex; $I_S$ is proper by the observation following its definition.
\end{proof}

\begin{lemma}\label{IS*=LS}
$I_S=L^*_S$.
 \end{lemma}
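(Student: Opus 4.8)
The plan is to avoid proving the identity $I_S=L_S^*$ by a direct two-sided estimate — the inequality $L_S^*\ge I_S$ would then demand a minimax or constraint-qualification argument — and instead to compute the conjugate $I_S^*$ and show that $I_S^*=L_S$; the assertion will follow from the Fenchel--Moreau theorem, since $I_S$ is proper convex and lower semi-continuous by Lemma \ref{IS-convex-lsc}, giving $I_S=I_S^{**}=(I_S^*)^*=L_S^*$. The guiding observation is that $L_S$ is the composition of $Q$ with the linear map $t\mapsto tS=\sum_{i=1}^d t_ig_i$ of $\R^d$ into $C(\Omega)$, that is $L_S(t)=Q(tS)$, and that the adjoint of this map is exactly $p_S$, because $\langle t,p_S(\mu)\rangle=\sum_{i=1}^d t_i\mu(g_i)=\mu(tS)$ for all $\mu\in\widetilde{\mathcal M}(\Omega)$ and all $t\in\R^d$.

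To compute $I_S^*$, I would fix $t\in\R^d$ and note that, since $I_S(x)=+\infty$ off $p_S(\mathcal M^\tau(\Omega))$, the supremum in $I_S^*(t)=\sup_{x}\{\langle t,x\rangle-I_S(x)\}$ effectively ranges over $x\in p_S(\mathcal M^\tau(\Omega))$. Substituting $I_S(x)=\inf\{Q^*(\mu):\mu\in\mathcal M^\tau(\Omega),\ p_S(\mu)=x\}$ and interchanging the outer supremum over $x$ with the inner infimum over $\mu$ collapses the double extremum into a single supremum over $\mu$:
\[I_S^*(t)=\sup_{\mu\in\mathcal M^\tau(\Omega)}\{\langle t,p_S(\mu)\rangle-Q^*(\mu)\}=\sup_{\mu\in\mathcal M^\tau(\Omega)}\{\mu(tS)-Q^*(\mu)\}.\]
This interchange is legitimate because the fibres $\{p_S(\mu)=x\}$ partition $\mathcal M^\tau(\Omega)$ as $x$ runs over $p_S(\mathcal M^\tau(\Omega))$.

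Finally I would invoke Lemma \ref{Fenchel-Legendre-transform-of-Q}: because $Q^*$ has effective domain $\mathcal M^\tau(\Omega)$, enlarging the supremum to all of $\widetilde{\mathcal M}(\Omega)$, where $Q^*=+\infty$ outside $\mathcal M^\tau(\Omega)$, leaves it unchanged, so
\[I_S^*(t)=\sup_{\mu\in\widetilde{\mathcal M}(\Omega)}\{\mu(tS)-Q^*(\mu)\}=Q^{**}(tS)=Q(tS)=L_S(t),\]
where $Q^{**}=Q$ is the Fenchel--Moreau theorem applied in the dual pair $(C(\Omega),\widetilde{\mathcal M}(\Omega))$ to the proper convex continuous function $Q$ of Lemma \ref{Fenchel-Legendre-transform-of-Q}. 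Hence $I_S^*=L_S$, and the Fenchel--Moreau theorem applied to $I_S$ yields $I_S=L_S^*$.

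The part I expect to need the most care is the two appeals to Fenchel--Moreau. The one for $Q$ must be read in the weak-$^*$ duality $(C(\Omega),\widetilde{\mathcal M}(\Omega))$, where $Q=Q^{**}$ holds because $Q$ is proper, convex and norm-continuous — exactly the ingredient used for $U=U^{**}$ in the proof of Lemma \ref{Fenchel-Legendre-transform-of-Q}. The one for $I_S$ is on the finite-dimensional space $\R^d$ and relies on the proper convexity and lower semi-continuity furnished by Lemma \ref{IS-convex-lsc}. The remaining items — the adjoint identity $\langle t,p_S(\mu)\rangle=\mu(tS)$ and the collapse of the double extremum to a single supremum — are routine.
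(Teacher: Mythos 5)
Your proposal is correct and follows essentially the same route as the paper: both establish $I_S^*=L_S$ by reducing $L_S(t)=Q(tS)=Q^{**}(tS)$ to $\sup_{\mu\in\mathcal{M}^{\tau}(\Omega)}\{\langle t,p_S(\mu)\rangle-Q^*(\mu)\}$ and matching this against the fibers of $p_S$, then conclude $I_S=L_S^*$ by biconjugation using Lemma \ref{IS-convex-lsc}. The only difference is presentational: you compute $I_S^*$ directly as the conjugate of an infimal image function, whereas the paper rules out both strict inequalities by contradiction.
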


\begin{proof}
 Let $\langle\ ,\ \rangle$ denote the scalar product in $\R^d$.
Suppose $\sup_{x\in\mathbb{R}^d}\{\langle
t,x\rangle-I_S(x)\}<L_S(t)$ for some $t\in\mathbb{R}^d$. Since
\[L_S(t)=Q(tS)=Q^{**}(tS)=\sup_{\mu\in\mathcal{M}^{\tau}(\Omega)}\{\langle t,p_S(\mu)\rangle -Q^*(\mu)\}\]
 there exists $\mu\in\mathcal{M}^{\tau}(\Omega)$ such that
$\sup_{x\in\mathbb{R}}\{\langle t,x\rangle -I_S(x)\}<\langle
t,p_S(\mu)\rangle-Q^*(\mu)$, which gives the contradiction by taking
$x=p_S(\mu)$ in the left hand side. Conversely, if
$\sup_{x\in\mathbb{R}}\{\langle t,x\rangle -I_S(x)\}>L_S(t)$ for
some  $t\in\mathbb{R}^d$, then $\langle
t,x\rangle-I_S(x)>\sup_{\mu\in\mathcal{M}^{\tau}(\Omega)}\{\langle
t,p_S(\mu)\rangle-Q^*(\mu)\}$ for some $x=p_S(\mu)$ with
$\mu\in\mathcal{M}^{\tau}(\Omega)$, which gives the contradiction. Therefore, we have 
$I^*_S=L_S$, which is equivalent to  the conclusion since  $I_S$  is convex proper and 
lower semi-continuous by Lemma \ref{IS-convex-lsc}.
\end{proof}

Let 
  $\textnormal{dom\ }\delta L_S^*$ denotes the set of
$x\in\mathbb{R}^d$ for which the set $\delta L_S^*(x)$  of
subgradients of $L_S^*$ at $x$ is nonempty.

\begin{lemma}\label{subgradient-Q*-vs-subgradient-IS}
For each $(x,t)\in\textnormal{dom\ }\delta L_S^*\times\delta L_S^*(x)$ we have $x=p_S(\mu)$ where $\mu$ is an 
 equilibrium state for  $f+tS$; moreover,
$L_S^*(p_S(\mu))=Q^*(\mu)$.
\end{lemma}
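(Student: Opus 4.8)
The plan is to convert the subgradient condition $t\in\delta L_S^*(x)$ into a Fenchel--Young equality and then exploit the explicit form of $Q^*$ given by Lemma~\ref{Fenchel-Legendre-transform-of-Q} to recognize the minimizer of $I_S$ at $x$ as an equilibrium state. First I would record that $L_S$ is finite, convex and continuous on $\R^d$: indeed $t\mapsto P^{\tau}(f+tS)$ is the composition of the affine map $t\mapsto f+tS$ with the finite convex continuous map $P^{\tau}$ (\cf \S\ref{Pressure and equilibrium state}), so $L_S$ is proper convex and lower semi-continuous and hence satisfies $(L_S^*)^*=L_S$. Consequently the standard duality for conjugate functions applies: for $x\in\textnormal{dom }\delta L_S^*$ and $t\in\delta L_S^*(x)$ one has
\[L_S^*(x)+L_S(t)=\langle t,x\rangle.\]

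Next I would invoke Lemma~\ref{IS*=LS}, namely $L_S^*=I_S$, together with the observation following the definition of $I_S$ that the infimum defining $I_S(x)$ is attained: there exists $\mu_x\in\mathcal{M}^{\tau}(\Omega)$ with $p_S(\mu_x)=x$ and $Q^*(\mu_x)=I_S(x)=L_S^*(x)$. Substituting into the Fenchel--Young equality and writing $\langle t,x\rangle=\langle t,p_S(\mu_x)\rangle=\mu_x(tS)$ gives
\[L_S(t)=\mu_x(tS)-Q^*(\mu_x).\]
I would then replace $Q^*(\mu_x)$ by its value $P^{\tau}(f)-h^{\tau}(\mu_x)-\mu_x(f)$ from Lemma~\ref{Fenchel-Legendre-transform-of-Q} and $L_S(t)$ by $P^{\tau}(f+tS)-P^{\tau}(f)$; after cancelling $P^{\tau}(f)$ this becomes
\[P^{\tau}(f+tS)=\mu_x(f+tS)+h^{\tau}(\mu_x),\]
which by (\ref{subsection-Thermodynamic formalism-eq20}) says precisely that $\mu_x$ is an equilibrium state for $f+tS$. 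Taking $\mu=\mu_x$ then yields $x=p_S(\mu)$ with $\mu$ an equilibrium state for $f+tS$, and $L_S^*(p_S(\mu))=Q^*(\mu)$ by the very choice of $\mu_x$, which is the assertion.

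The only genuinely delicate point is the direction of the duality, namely that membership $t\in\delta L_S^*(x)$ forces the Fenchel--Young \emph{equality} $L_S^*(x)+L_S(t)=\langle t,x\rangle$ rather than merely the Young inequality; this is where the finiteness and continuity of $L_S$ on all of $\R^d$ (hence $(L_S^*)^*=L_S$) is essential, and I would make sure to invoke it explicitly. Everything else is a short substitution; in particular the attainment of the infimum defining $I_S$, already noted in the text, is exactly what upgrades the a priori inequality $I_S(x)\le Q^*(\mu_x)$ to the equality needed to read off the equilibrium condition from the explicit formula for $Q^*$.
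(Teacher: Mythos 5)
Your proposal is correct and follows essentially the same route as the paper: pass from $t\in\delta L_S^*(x)$ to the Fenchel--Young equality $\langle t,x\rangle-L_S^*(x)=L_S(t)$ via $(L_S^*)^*=L_S$, use Lemma~\ref{IS*=LS} and the attainment of the infimum defining $I_S$ to produce $\mu$ with $p_S(\mu)=x$ and $L_S^*(x)=Q^*(\mu)$, and then read off the equilibrium condition from the explicit formula for $Q^*$. You merely spell out the final substitution into (\ref{subsection-Thermodynamic formalism-eq20}) that the paper leaves implicit in the chain of equalities ending with $\mu(tS)-Q^*(\mu)$.
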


\begin{proof}
Let $(x,t)\in\textnormal{dom\ }\delta L_S^*\times\delta L_S^*(x)$. 
Necessarily $x$ belongs to the effective domain of $L_S^*$, and so by Lemma \ref{IS*=LS} 
 there exists
$\mu\in\mathcal{M}^{\tau}(\Omega)$ such that $x=p_S(\mu)$ and
$L_S^*(x)=Q^*(\mu)$. Consequently, we have
\[\langle t,x\rangle-L_S^*(x)=L_S(t)=Q(tS)=\langle t,p_S(\mu)\rangle -Q^*(\mu)=
\mu(tS)-Q^*(\mu),\] which proves the lemma.
\end{proof}

\begin{proof-theo}
Let $V$ be a vector space as  in  Theorem \ref{Kifer-implies-approx}.   Since $C(\Omega)$ is second countable,  $V$ contains a countable  set $\{g_n:n\in\N\}$ dense in $C(\Omega)$; put $W=\textnormal{span}(\{g_n:n\in\N\})$. 
Let $\mu\in\mathcal{M}^{\tau}(\Omega)$. 
For each $n\in\N$
  we  put  $S_n=(g_1,...,g_n)$, $x_n=p_{S_n}(\mu)$;   note that 
$x_n$ belongs to the effective domain of $L_{S_n}^*$ by Lemma \ref{IS*=LS}. 
For each
 $n\in\N$,  Br\"{o}ndsted-Rockafellar theorem (\cite{Brondsted_Rockafellar(1965)TAMS16}, Theorem 2) ensures the existence of 
 a sequence $(x_{n,m})$ in $\textnormal{dom\ }\delta L_{S_n}^*$ such
that $\lim_m x_{n,m}=x_n$ and  $\lim_m L^*_{S_n}(x_{n,m})= L^*_{S_n}(x_n)$; by
Lemma \ref{subgradient-Q*-vs-subgradient-IS},  we have $x_{n,m}=p_{S_n}(\mu_{n,m})$ and
$L^*_{S_n}(x_{n,m})=Q^*(\mu_{n,m})$, where $\mu_{n,m}$ is the unique 
equilibrium state for some $f+t_{n,m} S_n$, with $t_{n,m}\in\delta
L_{S_n}^*(x_{n,m})$; therefore, we have
\begin{equation}\label{ext-Kifer-eq5}
\forall (n,g)\in \N\times\textnormal{span}(\{g_1,...,g_n\}),\ \ \ \ \ \ \lim_m\mu_{n,m}(g)=\mu(g)
\end{equation}
 and
\begin{equation}\label{ext-Kifer-eq10}
\forall n\in\N,\ \ \ \ \ \ \ \ \ Q^*(\mu)\ge I_{S_n}(x_n)=L^*_{S_n}(x_n)=\lim_m Q^*(\mu_{n,m}).
\end{equation}
Since each $g\in W$ belongs  to $\textnormal{span}(\{g_1,...,g_n\})$ for all $n$ large enough, (\ref{ext-Kifer-eq5})   yields 
 \begin{equation}\label{ext-Kifer-eq20}
 \forall g\in W,\ \ \ \ \ \ \ \ \lim_n\lim_m\mu_{n,m}(g)=\mu(g).
 \end{equation}
By considering the product  set
$\wp=\N\times\mathbb{N}^\N$ pointwise directed, we obtain  a net
$(\mu_i)_{i\in\wp}$ in $\mathcal{M}^{\tau}(\Omega)$ defined
for each $i=(n,u)\in\wp$ by  $\mu_i=\mu_{n,u(n)}$; 
since $W$ is dense in $C(\Omega)$, (\ref{ext-Kifer-eq20}) yields
\begin{equation}\label{ext-Kifer-eq25}
\lim_i\mu_i=\mu
\end{equation}
(\cite{Kelley-91}, Theorem on Iterated Limits, p. 69).
Let  $s$ be the function  defined on $\N^2$ by 
\[\forall (n,m)\in\N^2,\ \ \ \ \ \ \ s(n,m)=Q^*(\mu_{n,m});\]
note that $s$ is real-valued by Lemma \ref{Fenchel-Legendre-transform-of-Q}.
The lower semi-continuity of $Q^*$ and  (\ref{ext-Kifer-eq25}) yield
 \begin{equation}\label{ext-Kifer-eq30}
\liminf_i Q^*(\mu_i)\ge Q^*(\mu).
\end{equation}
  Since     $Q^*(\mu)\ge\limsup_n\lim_m Q^*(\mu_{n,m})$ by  (\ref{ext-Kifer-eq10}),  we have
       \begin{equation}\label{ext-Kifer-eq40}
Q^*(\mu)\ge\limsup_i Q^*(\mu_i)
\end{equation}
by  Lemma
\ref{lemma-conv-net}.
From   (\ref{ext-Kifer-eq30}) and  (\ref{ext-Kifer-eq40}) we get 
$\lim_i Q^*(\mu_i)=Q^*(\mu)$, \ie $\lim_i h^\tau(\mu_i)=h^\tau(\mu)$ by Lemma \ref{Fenchel-Legendre-transform-of-Q}, 
which together with (\ref{ext-Kifer-eq25}) shows that the net  $(\mu_i,h^\tau(\mu_i))$ converges to $(\mu,h^\tau(\mu))$. 
Denoting  $\sS$  the subset of $\mathcal{M}^{\tau}(\Omega)$ constituted by  the measures that are unique equilibrium states for some element in $f+W$,  we have proved that the graph of ${h^\tau}_{\mid \sS}$  is dense in the graph of $h^\tau$; the conclusion follows since the graph of $h^\tau$ is a 
  first countable space. 
\end{proof-theo}

\subsection{Examples of large deviation principles as consequence of (D)}

In this section we present two examples where the large deviation principle  is  a direct consequence of (D), but where 
 the first condition may not be  fulfilled,  and thus that cannot be proved using  
 Theorem C and Remark B.2 of \cite{Comman_Rivera-Letelier(2010)ETDS31} neither with  Theorem 2.1 of \cite{Kifer-TAMS-90}. 
 They are both obtained from   a  net $(t_\alpha,m_\alpha)$ generating the pressure in the sense that 
 \[\forall g\in C(\Omega),\ \ \ \ \ \ \ \ \ 
\lim_\alpha t_\alpha\log\int_{\mathcal{M}(\Omega)}e^{t_\alpha^{-1}\int_{\Omega}g(\xi)\mu(d\xi)}m_\alpha(d\mu)=P^{\tau}(g),
   \]
   which yields  (\ref{Introduction and statement of the result-eq20}) after normalization and the choice of an arbitrary function $f\in C(\Omega)$ (so that no vector space $V$ can fulfil the first condition stated in \S \ref{systems considered}
  when $f$ admits several equilibrium state); however, assuming (D),   
    the large deviation principle  follows as a straightforward application of Theorem 5.2 of \cite{Comman(2009)NON22}.

\subsubsection{Property (D) and maximal separated sets}\label{LDP-general-case-separated-sets}
We generalize   Theorem 5.7(a) and Example 4.1  of  \cite{Comman(2009)NON22} to any  dynamical system  $(\Omega,\tau)$ 
as in \S \ref{systems considered}.  

  Let  $\wp$ denote    the product  set
$]0,+\infty[\times {\N^l}^{]0,+\infty[}$ pointwise directed, where $]0,+\infty[$ (resp. $\N$,  $\N^l$)  is endowed  with the inverse of the  natural order on $\R$ (resp. natural order, lexicographic order), \ie $(\varepsilon,u)\in\wp$ is less than or equal  $(\varepsilon',u')\in\wp$ if $\varepsilon\ge\varepsilon'$ and $u(\delta)$ is lexicographically  less than or equal $u'(\delta)$ for all $\delta\in\ ]0,+\infty[$ (\cf \cite{Kelley-91}).
For each  $\alpha=(\varepsilon,u)\in\wp$ we put 
$\Lambda_\alpha=\Lambda(u(\varepsilon))$,  $\Omega_\alpha=\Omega_{\varepsilon,u(\varepsilon)}$ and 
\[\forall f\in C(\Omega),\ \ \ \ \ \ \ \ \nu^\tau_{f,\alpha}=\sum_{\xi\in
\Omega_\alpha}\frac{e^{\sum_{x\in\Lambda_\alpha}f(\tau^x\xi)}}{\sum_{\xi'\in
\Omega_\alpha}e^{\sum_{x\in\Lambda_\alpha}f(\tau^x\xi')}}\delta_{\frac{1}{\mid\Lambda_\alpha\mid}\sum_{x\in
 \Lambda_\alpha}\delta_{\tau^x(\xi)}}.\]

 \begin{proposition}\label{general-example-LDP-maximal-separated-set}
 If (D) holds, then for each $f\in C(\Omega)$ the net $(\nu^\tau_{f,\alpha})$
 satisfies a   large
deviation principle in $\mathcal{M}(\Omega)$ with powers $(\mid\Lambda_\alpha\mid^{-1})$ and  rate function
\[
\mathcal{M}(\Omega)\ni\mu\mapsto
\left\{
\begin{array}{ll}
P^{\tau}(f)-\mu(f)-h^{\tau}(\mu) & \textnormal{if $\mu\in\mathcal{M}^{\tau}(\Omega)$}
\\ 
+\infty & \textnormal{if $\mu\in\mathcal{M}(\Omega)\setminus \mathcal{M}^{\tau}(\Omega)$}.
\end{array}
\right.
\]
\end{proposition}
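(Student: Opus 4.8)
The plan is to verify that the net $(\nu^\tau_{f,\alpha},|\Lambda_\alpha|^{-1})$ satisfies the hypothesis (\ref{Introduction and statement of the result-eq20}) and then to quote Theorem 5.2 of \cite{Comman(2009)NON22}, reading off the rate function from Lemma \ref{Fenchel-Legendre-transform-of-Q}. Throughout, write $t_\alpha=|\Lambda_\alpha|^{-1}$ and, for $h\in C(\Omega)$, $\varepsilon>0$ and $a\in\N^l$, set $\Phi_h(\varepsilon,a)=|\Lambda(a)|^{-1}\log\sum_{\xi\in\Omega_{\varepsilon,a}}e^{\sum_{x\in\Lambda(a)}h(\tau^x\xi)}$.

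First I would compute the logarithmic moment functional. Since $\nu^\tau_{f,\alpha}$ is carried by the empirical measures $\frac{1}{|\Lambda_\alpha|}\sum_{x\in\Lambda_\alpha}\delta_{\tau^x\xi}$, the factor $t_\alpha^{-1}|\Lambda_\alpha|^{-1}$ equals $1$ and the Gibbs weights telescope, giving for every $g\in C(\Omega)$ and $\alpha=(\varepsilon,u)$
\[t_\alpha\log\int_{\mathcal{M}(\Omega)}e^{t_\alpha^{-1}\int_\Omega g\,d\mu}\,\nu^\tau_{f,\alpha}(d\mu)=\Phi_{f+g}(\varepsilon,u(\varepsilon))-\Phi_f(\varepsilon,u(\varepsilon)).\]
So (\ref{Introduction and statement of the result-eq20}) reduces to showing $\lim_\alpha\Phi_h(\varepsilon,u(\varepsilon))=P^\tau(h)$ for every $h\in C(\Omega)$, applied to $h=f+g$ and $h=f$ and then subtracted (the limit of a difference being the difference of the limits once both exist).

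The heart of the argument is this net convergence $\lim_\alpha\Phi_h(\varepsilon,u(\varepsilon))=P^\tau(h)$. The directed set $\wp$ is exactly of the form $J\times L^J$ from Lemma \ref{lemma-conv-net}, with $J=\,]0,+\infty[$ carrying the reverse order (so that moving up in $\wp$ means $\varepsilon\to0$) and $L=\N^l$; hence by that lemma it suffices to verify $\liminf_\varepsilon\liminf_a\Phi_h(\varepsilon,a)=\limsup_\varepsilon\limsup_a\Phi_h(\varepsilon,a)=P^\tau(h)$. This is supplied by the two formulas for the pressure: the definition of $P^\tau$ gives $\lim_{\varepsilon\to0}\limsup_a\Phi_h(\varepsilon,a)=P^\tau(h)$, forcing the outer $\limsup_\varepsilon$ of the inner $\limsup_a$ to equal $P^\tau(h)$, while (\ref{subsection-Thermodynamic formalism-eq20}) gives $\lim_{\varepsilon\to0}\liminf_a\Phi_h(\varepsilon,a)=P^\tau(h)$, forcing the outer $\liminf_\varepsilon$ of the inner $\liminf_a$ to equal $P^\tau(h)$. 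Once (\ref{Introduction and statement of the result-eq20}) is established, Theorem 5.2 of \cite{Comman(2009)NON22} applies under (D) and yields the large deviation principle with powers $(|\Lambda_\alpha|^{-1})$ and rate function the Fenchel-Legendre transform of $Q(g)=P^\tau(f+g)-P^\tau(f)$; by Lemma \ref{Fenchel-Legendre-transform-of-Q} this equals $P^\tau(f)-\mu(f)-h^\tau(\mu)$ on $\mathcal{M}^\tau(\Omega)$ and $+\infty$ elsewhere, as claimed.

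The step I expect to require the most care is precisely this net convergence: one must make sure that both the $\limsup$ and the $\liminf$ versions of the pressure coincide with $P^\tau(h)$ after the $\varepsilon\to0$ limit, so that Lemma \ref{lemma-conv-net} genuinely applies, and that the reverse order placed on $]0,+\infty[$ is correctly matched to the $\varepsilon\to0$ direction appearing in the pressure formulas; everything else is a routine manipulation of Gibbs sums.
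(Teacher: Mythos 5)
Your argument is correct and follows the paper's own proof essentially verbatim: the paper likewise applies Lemma \ref{lemma-conv-net} with $J=\,]0,+\infty[$ reverse-ordered and $L=\N^l$ to the function $s_g(\varepsilon,a)=\frac{1}{|\Lambda(a)|}\log\sum_{\xi\in\Omega_{\varepsilon,a}}e^{\sum_{x\in\Lambda(a)}g(\tau^x\xi)}$, deduces $P^\tau(g)=\lim_\alpha s_g$ from the two pressure formulas, verifies (\ref{Introduction and statement of the result-eq20}) by the same telescoping of the Gibbs weights, and concludes via Theorem 5.2 of \cite{Comman(2009)NON22}. Your extra care about matching the reverse order on $]0,+\infty[$ to the $\varepsilon\to0$ direction, and about both iterated $\liminf$ and $\limsup$ coinciding, is exactly the point the paper's appeal to Lemma \ref{lemma-conv-net} is designed to handle.
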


\begin{proof}
For each $g\in C(\Omega)$ let $s_g$ be the real-valued map on $]0,+\infty[\times\N^l$
defined by 
  \[\forall (\varepsilon,a)\in\  ]0,+\infty[\times\N^l,\ \ \ \ \ \ \ \ s_g(\varepsilon,a)=\frac{1}{|\Lambda (a)|}\log\sum_{\xi\in
\Omega_{\varepsilon,a}}e^{\sum_{x\in\Lambda(a)}g(\tau^x\xi)}.\]
Then,  (\ref{subsection-Thermodynamic formalism-eq20}) together with  Lemma \ref{lemma-conv-net} (applied with $J=\ ]0,+\infty[$,   $L=\N^l$  and the above function  $s_g$) yields
\[\forall g\in C(\Omega),\ \ \ \ \ \ \ \ P^{\tau}(g)=\lim_\alpha \frac{1}{|\Lambda_\alpha|}\log\sum_{\xi\in
\Omega_\alpha}e^{\sum_{x\in\Lambda_\alpha}g(\tau^x\xi)}\]
hence
\[\forall (f,g)\in C(\Omega)^2,\ \ \ \ \ \ \ 
\lim_\alpha \frac{1}{|\Lambda_\alpha|}\log\int_{\mathcal{M}(\Omega)}
e^{(t^\tau_\alpha)^{-1}\int_\Omega g(\omega)\mu(d\omega)}\nu^\tau_{f,\alpha}(d\mu)=P^{\tau}(f+g)-P^{\tau}(f)\]
 (\ie (\ref{Introduction and statement of the result-eq20}) holds with $(\nu_\alpha,t_\alpha)=(\nu^\tau_{f,\alpha},{|\Lambda_\alpha|}^{-1}$); the conclusion follows from Theorem 5.2 of \cite{Comman(2009)NON22}.
     \end{proof}

\subsubsection{Subshifts of finite type  and periodic points}\label{LDP-general-case-periodic-points}

The following example  illustrates how direct is the use of (D)  in order to get a   large deviation principle in comparison with usual proofs: indeed, the conclusion  of Proposition \ref{subshift-finite-type-LDP-periodic-points} when $f=0$  is exactly  Theorem C of  \cite{Eiz-Kif-Weis-94-CMP} (note that the fact that we get at once the general  case  $f\in C(\Omega)$ is just a bonus since it    follows from the case $f=0$ by a standard  result in large deviation theory, \cf Appendix B in \cite{Ellis-1985}).

  Let   $(\Omega,\tau)$ be a $l$-dimensional  subshift of finite type satisfying strong specification (\cite{Ruelle-73-TAMS}, \cite{Eiz-Kif-Weis-94-CMP}). For each $a\in\N^l$  we put
\[\forall f\in C(\Omega),\ \ \ \ \ \ \ \ \nu_{f,a}=\sum_{\xi\in\textnormal{Per}_a}
\frac{e^{\sum_{x\in\Lambda(a)}f(\tau^x\xi)}}{\sum_{\xi'\in
\textnormal{Per}_a }e^{\sum_{x\in\Lambda(a)}f(\tau^x\xi')}}\delta_{\frac{1}{\mid\Lambda(a)\mid}\sum_{x\in
 \Lambda(a)}\delta_{\tau^x(\xi)}},\] 
 where  $\textnormal{Per}_a$ denote the set of $a$-periodic points.

\begin{proposition}\label{subshift-finite-type-LDP-periodic-points}
For each $f\in C(\Omega)$
the net $(\nu_{f,a})$
 satisfies a   large
deviation principle in $\mathcal{M}(\Omega)$ with powers $(\mid\Lambda(a)\mid^{-1})$ and  rate function
\[
\mathcal{M}(\Omega)\ni\mu\mapsto
\left\{
\begin{array}{ll}
P^{\tau}(f)-\mu(f)-h^{\tau}(\mu) & \textnormal{if $\mu\in\mathcal{M}^{\tau}(\Omega)$}
\\ 
+\infty & \textnormal{if $\mu\in\mathcal{M}(\Omega)\setminus \mathcal{M}^{\tau}(\Omega)$},
\end{array}
\right.
\]
\end{proposition}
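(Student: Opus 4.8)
The plan is to imitate the proof of Proposition \ref{general-example-LDP-maximal-separated-set}: I would reduce everything to the verification of (\ref{Introduction and statement of the result-eq20}) for the net $(\nu_{f,a})_{a\in\N^l}$ with powers $(\mid\Lambda(a)\mid^{-1})$, and then conclude by (D) together with Theorem 5.2 of \cite{Comman(2009)NON22}. Two preliminary observations make the second half automatic. First, a subshift of finite type is expansive, so $h^{\tau}$ is finite and upper semi-continuous and the standing hypotheses of \S \ref{systems considered} are satisfied. Second, strong specification implies weak specification, whence Theorem B of \cite{Eiz-Kif-Weis-94-CMP} guarantees that (D) holds. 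Thus, once (\ref{Introduction and statement of the result-eq20}) is in hand, Theorem 5.2 of \cite{Comman(2009)NON22} delivers the large deviation principle, and by Lemma \ref{Fenchel-Legendre-transform-of-Q} its rate function is exactly $Q^*$, i.e. the function displayed in the statement.

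The first step is an algebraic reduction. Writing $Z_a(h)=\sum_{\xi\in\textnormal{Per}_a}e^{\sum_{x\in\Lambda(a)}h(\tau^x\xi)}$ for $h\in C(\Omega)$, and using that the atom of $\nu_{f,a}$ at the empirical measure $\mid\Lambda(a)\mid^{-1}\sum_{x\in\Lambda(a)}\delta_{\tau^x\xi}$ equals $e^{\sum_{x\in\Lambda(a)}f(\tau^x\xi)}/Z_a(f)$, a direct computation gives
\[\frac{1}{\mid\Lambda(a)\mid}\log\int_{\mathcal{M}(\Omega)}e^{\mid\Lambda(a)\mid\,\mu(g)}\nu_{f,a}(d\mu)=\frac{1}{\mid\Lambda(a)\mid}\log Z_a(f+g)-\frac{1}{\mid\Lambda(a)\mid}\log Z_a(f).\]
Hence (\ref{Introduction and statement of the result-eq20}) is equivalent to the single assertion that periodic points compute the pressure, namely
\[\forall h\in C(\Omega),\qquad\lim_a\frac{1}{\mid\Lambda(a)\mid}\log Z_a(h)=P^{\tau}(h),\]
the limit being taken along the directed set $\N^l$ as in \S \ref{Pressure and equilibrium state}.

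The heart of the matter, and the only place where strong specification is genuinely used, is this last limit; I would establish it by comparison with the separated-set formula (\ref{subsection-Thermodynamic formalism-eq20}). For the upper bound I would fix $\varepsilon$ below the expansiveness constant, so that distinct $a$-periodic points are automatically $(\varepsilon,\Lambda(a))$-separated; then the partition sum over $\textnormal{Per}_a$ is dominated, up to a Birkhoff-sum distortion controlled by the modulus of continuity of $h$, by that over a maximal $(\varepsilon,\Lambda(a))$-separated set, and letting $\varepsilon\to 0$ in (\ref{subsection-Thermodynamic formalism-eq20}) yields $\limsup_a\mid\Lambda(a)\mid^{-1}\log Z_a(h)\le P^{\tau}(h)$. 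The lower bound is the main obstacle, and it is precisely what forces \emph{strong} rather than weak specification: given a maximal $(\varepsilon,\Lambda(a))$-separated set, strong specification allows one to glue each of its orbit segments into a genuinely $a$-periodic configuration agreeing with it away from a boundary layer, producing a boundedly-to-one map from the separated set into $\textnormal{Per}_a$ whose Birkhoff sums are distorted only by a modulus-of-continuity term together with a boundary correction negligible on the exponential scale, as in the derivation of (\ref{subsection-Thermodynamic formalism-eq20}). Letting $\varepsilon\to 0$ then gives $\liminf_a\mid\Lambda(a)\mid^{-1}\log Z_a(h)\ge P^{\tau}(h)$, and combining the two bounds yields the required limit, hence (\ref{Introduction and statement of the result-eq20}) and the proposition.
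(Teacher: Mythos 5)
Your proposal is correct, and its overall architecture coincides with the paper's: reduce the claim to the verification of (\ref{Introduction and statement of the result-eq20}) for $(\nu_{f,a},\mid\Lambda(a)\mid^{-1})$ via the partition-function identity, obtain (D) from strong specification $\Rightarrow$ weak specification together with Theorem B of \cite{Eiz-Kif-Weis-94-CMP} and upper semi-continuity of $h^\tau$ by expansiveness, and conclude by Theorem 5.2 of \cite{Comman(2009)NON22}. The one place where you diverge is the crucial limit $\lim_a\mid\Lambda(a)\mid^{-1}\log Z_a(h)=P^\tau(h)$: the paper disposes of it in one line by citing Theorem 2.2 of \cite{Ruelle-73-TAMS}, whereas you sketch a direct derivation by comparison with (\ref{subsection-Thermodynamic formalism-eq20}). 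Your sketch is essentially the standard proof of Ruelle's theorem: the upper bound via expansiveness (distinct $a$-periodic points that are $\varepsilon$-close over $\Lambda(a)$ are $\varepsilon$-close over all of $\Z^l$ by periodicity, hence equal, so $\textnormal{Per}_a$ is $(\varepsilon,\Lambda(a))$-separated) is correct and clean, while the lower bound via gluing separated orbit segments into $a$-periodic configurations is the genuinely delicate step --- one must check that the specification constant's boundary layer does not collapse too many separated points onto the same periodic configuration, which is exactly the bookkeeping the citation spares you. So your route is self-contained but longer; the paper's is shorter at the cost of an external reference. Note also that the whole point of the proposition, as the paper frames it, is that \emph{given} the classical inputs (Ruelle's periodic-point pressure formula and Theorem B of \cite{Eiz-Kif-Weis-94-CMP}), the large deviation principle drops out immediately from (D); re-proving Ruelle's theorem somewhat obscures that message, though it does make the role of \emph{strong} versus weak specification explicit, which you correctly identify as confined to the lower bound.
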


\begin{proof}
 Theorem 2.2 of \cite{Ruelle-73-TAMS}   yields
\[\forall g\in C(\Omega),\ \ \ \ \ \ \ \ \ \ P^{\tau}(g)=\lim_a \frac{1}{|\Lambda(a)|}\log\sum_{\xi\in\textnormal{Per}_a}
e^{\sum_{x\in\Lambda(a)}g(\tau^x\xi)}
\]
hence 
\[\forall (f,g)\in C(\Omega)^2,\ \ \ \ \ \ \ \ \ \ \ 
\lim_a \frac{1}{|\Lambda(a)|}\log\int_{\mathcal{M}(\Omega)}
e^{(t_a)^{-1}\int_\Omega g(\omega)\mu(d\omega)}\nu_{f,a}(d\mu)=P^{\tau}(f+g)-P^{\tau}(f).\]
Since the  strong specification  implies the weak specification,    and since $h^\tau$ is upper semi-continuous by expansiveness, (D) holds by   Theorem B of  \cite{Eiz-Kif-Weis-94-CMP}; the conclusion follows from Theorem 5.2 of \cite{Comman(2009)NON22}.
\end{proof}

\end{document}